\newtheorem{theorem}{Theorem}[section]
\theoremstyle{remark}
\newtheorem{remark}{Remark}[section]
\numberwithin{equation}{section}
 \DeclareMathOperator\hdim{\dim_H}
\def\N{\mathbb{N}}
\begin{document}

\title[On exceptional sets in  Erd\H{o}s-R\'{e}nyi limit theorem revisited]{On exceptional sets in Erd\H{o}s-R\'{e}nyi limit theorem revisited}

\author[Jinjun Li]{Jinjun Li*}

 \address[Jinjun Li]{Department of Mathematics,~Minnan Normal
University,~Zhang\-zhou, 363000, P.~R.~China}
\email{li-jinjun@163.com}

\author{Min Wu}
\address[Min Wu]{Department of Mathematics, South China
 University of Technology, Guang\-zhou, 510641, P.~R.~China}
\email{wumin@scut.edu.cn}

\thanks{*Corresponding author. Email: li-jinjun@163.com}

\subjclass{Primary 28A80; 60F99; Secondary 54E52}

\keywords{run-length function, Erd\H{o}s-R\'{e}nyi limit theorem, Hausdorff dimension, residual set.}


\begin{abstract}

For $x\in [0,1],$ the run-length function $r_n(x)$ is defined as the length of the longest run of $1$'s amongst the first $n$ dyadic digits in the dyadic expansion of $x.$  Let $H$ denote the set of monotonically increasing functions $\varphi:\mathbb{N}\to (0,+\infty)$ with $\lim\limits_{n\to\infty}\varphi(n)=+\infty$. For any $\varphi\in H$, we prove that the set
\[
E_{\max}^\varphi=\left\{x\in [0,1]:\liminf\limits_{n\to\infty}\frac{r_n(x)}{\varphi(n)}=0, \limsup\limits_{n\to\infty}\frac{r_n(x)}{\varphi(n)}=+\infty\right\}
\]
either has Hausdorff dimension one and is residual in $[0,1]$ or is empty. The result solves a conjecture posed in \cite{LW5} affirmatively.
\end{abstract}

\maketitle
\section{Introduction}
The aim of the present paper is to solve a problem on exceptional sets in Erd\H{o}s-R\'{e}nyi limit theorem which was posed in \cite{LW5}. Let us recall the background and some notation in \cite{LW5}. The run-length function $r_n(x),$ which was introduced to measure the length of consecutive terms of ``heads" in $n$ Bernoulli trials, is defined as follows. It is well known that every $x\in [0,1]$ admits a dyadic expansion:
\[
x=\sum_{k=1}^\infty\frac{x_k}{2^k},
\]
where $x_k\in \{0,1\}$ for any $k\ge 1$. Write $\Sigma^\infty=\{0,1\}^\N.$ The infinite sequence $(x_1,x_2, x_3,\ldots)\in \Sigma^\infty$ is called the digit sequence of $x.$ Let $\pi:\Sigma^\infty\to [0,1]$ be the code map, that is, $\pi ((x_1,x_2, x_3,\ldots))=x.$ For each $n\ge 1$ and $x\in [0,1]$, the run-length function $r_n(x)$ is defined as the length of the longest run of $1$'s in $(x_1,x_2,\ldots,x_n)$, that is,
\[
r_n(x)=\max\{\ell: \text{$x_{i+1}=\cdots=x_{i+\ell}=1$ for some $0\le i\le n-\ell$}\}.
\]
The run-length function has been extensively studied in probability theory and used in reliability theory, biology, quality control.
 Erd\H{o}s and R\'{e}nyi \cite{ER} (see also \cite{Re}) proved the following asymptotic behavior of $r_n$: for Lebesgue almost all $x\in[0,1]$,
\begin{equation}\label{lim}
\lim\limits_{n\to\infty}\frac{r_n(x)}{\log_2n}=1.
\end{equation}

Roughly speaking, the rate of growth of $r_n(x)$ is $\log_2n$ for almost all $x\in [0,1].$  Recently, some special sets consisting of points whose run-length function obey other asymptotic behavior instead of $\log_2n$ was considered by Zou \cite{Zou}. Chen and Wen \cite{CW} studied some level sets on the frequency involved in dyadic expansion and run-length function.  For more details about the run-length function, we refer the reader to the book \cite{Re}.

The limit in \eqref{lim} may not exist. Therefore, it is natural to study the exceptional set in the above Erd\H{o}s-R\'{e}nyi limit theorem.
Let
\[
E=\left\{x\in [0,1]:\liminf\limits_{n\to\infty}\frac{r_n(x)}{\log_2n}<\limsup\limits_{n\to\infty}\frac{r_n(x)}{\log_2n}\right\}.
\]
It follows from the Erd\H{o}s-R\'{e}nyi limit theorem that the set $E$ is negligible from the measure-theoretical point of view. On the other hand, we also often employ some fractal dimensions to characterize the size of a set. Hausdorff dimension perhaps is the most popular one.  Ma et al. \cite{MWW} proved that the set of points that violate the above Erd\H{o}s and R\'{e}nyi law has full Hausdorff dimension. It is worth to point out that $E$ is smaller than the set considered in \cite{MWW} because we consider the asymptotic behavior of $r_n(x)$ with respect to the fixed speed $\log_2 n.$ There is a natural question: what is the Hausdorff dimesnion of the set $E$? In fact, questions related to the exceptional sets  from dynamics and fractals have recently attracted huge interest in the literature.
Generally speaking, exceptional sets are big from the dimensional point of view, and they have the same fractal dimensions as the underlying phase spaces, see \cite{APT,APT2,BS,FJW,FW,KL,LWX,LW4,OW,Pe,PP,PT} and references therein.

Define
\[
E_{\max}=\left\{x\in [0,1]:\liminf\limits_{n\to\infty}\frac{r_n(x)}{\log_2n}=0, \limsup\limits_{n\to\infty}\frac{r_n(x)}{\log_2n}=+\infty\right\}.
\]
That is,  $E_{\max}$ is the set consisting of those ``worst'' divergence points.
Clearly,  $E_{\max}\subset E.$

Intuitively, we feel that the set $E_{\max}$ shall be ``small''. However, the authors showed in \cite{LW5} that $\hdim E_{\max}=1.$ Here and in the sequel, $\hdim E$ denotes the Hausdorff dimension of the set $E$.  For more details about Hausdorff dimension and the theory of fractal dimensions, we refer the reader to the famous book \cite{Fal}.

Moreover, it is also natural to study the asymptotic behavior of run-length function with respect to other speeds instead of $\log_2n.$  In fact, In \cite{LW5} the authors proved that the exceptional sets with respect to a more general class of speeds still have full dimensions. To state the result, we need to introduce some notation. Let $H$ denote the set of monotonically increasing function $\varphi:\mathbb{N}\to (0,+\infty)$ with $\lim\limits_{n\to +\infty}f(n)=+\infty$.
For $\varphi\in H$, define

\begin{equation}\label{jhf}
E_{\max}^\varphi=\left\{x\in [0,1]:\liminf\limits_{n\to\infty}\frac{r_n(x)}{\varphi(n)}=0, \limsup\limits_{n\to\infty}\frac{r_n(x)}{\varphi(n)}=+\infty\right\}.
\end{equation}
Consider the following subset of $H$:
 \[
 A=\left\{\varphi\in H: \text{there exists $0<\alpha\le 1$ such that $\limsup\limits_{n\to\infty}\frac{n}{\varphi(n^{1+\alpha})}=+\infty$}\right\}.
 \]
In \cite{LW5}, the authors obtained the following result.
\begin{theorem}[\cite{LW5}]\label{MR11}
Let $\varphi\in A$ and $E_{\max}^\varphi$ be defined as in $\eqref{jhf}$. Then
\[
\hdim E_{\max}^\varphi=1.
\]
\end{theorem}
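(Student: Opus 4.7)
The plan is to establish the nontrivial lower bound $\hdim E_{\max}^\varphi \geq 1$, the upper bound being automatic since $E_{\max}^\varphi \subset [0,1]$. For each $\beta < 1$ I would construct a Moran-type subset $F_\beta \subset E_{\max}^\varphi$ with $\hdim F_\beta \geq \beta$. The construction exploits the hypothesis $\varphi \in A$, which supplies an exponent $\alpha \in (0,1]$ and an increasing integer sequence $p_k \to \infty$ with $p_k/\varphi(p_k^{1+\alpha}) \to \infty$.

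I would inductively choose time scales $q_1 < q_2 < \cdots$, with $q_k$ comparable to $p_k^{1+\alpha}$ but pushed sufficiently far ahead to absorb the other estimates, and assemble admissible digit sequences $(x_n)$ block by block. At the end of the $k$-th stage I force $x_{q_k-p_k+1} = \cdots = x_{q_k} = 1$, so that
\[
\frac{r_{q_k}(x)}{\varphi(q_k)} \;\geq\; \frac{p_k}{\varphi(p_k^{1+\alpha})} \;\longrightarrow\; \infty,
\]
which delivers $\limsup_n r_n(x)/\varphi(n) = +\infty$. In the free zone between $q_k$ and $q_{k+1} - p_{k+1}$ I permit any $0/1$ pattern subject only to inserting a forced $0$ every $p_k$ positions; this cap keeps every run lying strictly inside the free zone shorter than $p_k$, so the running maximum of $r_n$ stays exactly $p_k$ until the next inserted block begins. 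Choosing an intermediate time $T_k \in (q_k, q_{k+1} - p_{k+1})$ with $\varphi(T_k)$ much larger than $p_k$---available since $\varphi \nearrow \infty$ and $q_{k+1}$ is at my disposal---gives $r_{T_k}(x)/\varphi(T_k) \to 0$, supplying $\liminf_n r_n(x)/\varphi(n) = 0$.

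For the dimension estimate I would place on $F_\beta$ a Bernoulli-like measure $\mu$ that equidistributes among the admissible extensions at every step and then invoke the mass distribution principle. The constrained positions---the forced ones inside the inserted blocks together with the forced zeros acting as spacers---have density tending to zero in $\{1,\ldots,q_k\}$, provided the $q_k$ are selected so that $\sum_{j \leq k} p_j = o(q_k)$ and $q_k/p_k \to \infty$. Cylinder-level computations of $\mu(I)$ then produce a lower H\"older exponent for $\mu$ arbitrarily close to $1$, yielding $\hdim F_\beta \geq \beta$; letting $\beta \nearrow 1$ finishes the argument.

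The main technical obstacle is balancing three competing requirements at once: the forced blocks must be long enough on the $\varphi$-scale at time $q_k$ to drive the $\limsup$ to infinity, short enough on the $\varphi$-scale at some later time $T_k < q_{k+1}$ to drive the $\liminf$ to zero, and jointly rare enough that the surviving free positions still carry full Hausdorff dimension. The two-scale gap $q_k = p_k^{1+\alpha} \gg p_k$ encoded in the defining condition of $A$ is exactly what reconciles the first two; arranging compatibility with the third, through a careful quantitative choice of $(p_k, q_k)$ and the accompanying cylinder-mass estimate, is where the technical bulk of the proof lies.
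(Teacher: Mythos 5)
Your plan is correct and would prove the theorem, but it follows a genuinely different route from the paper. The paper obtains Theorem \ref{MR11} as a corollary of the more general Theorem \ref{MRR1}: there one starts from the self-similar set $E_p$ (sequences beginning with $1^p$ and carrying forced zeros at positions $kp+1$ and $kp+p$), of dimension $\frac{p-2}{p}$, and transports it into $E_{\max}^\varphi$ by a map $f$ that inserts blocks $01^{[\log\varphi(n_{2k+1})]}0$ and $01^{[\frac12 n_{2k+2}]}0$ at a lacunary sequence of times, the dimension being preserved because $f^{-1}$ is argued to be nearly Lipschitz. You instead build a Moran set directly and control its dimension by the density of constrained coordinates plus the mass distribution principle. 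The difference is quantitative and it matters: your inserted runs have length $p_k$ at time $q_k\approx p_k^{1+\alpha}$, hence density about $p_k^{-\alpha}\to 0$ in every initial segment --- this is exactly what the hypothesis $\varphi\in A$ buys, and it makes the dimension count transparent. The paper's general construction inserts runs of length $[\frac12 n_{2k}]$ ending at time $n_{2k}$, which occupy half of the first $n_{2k}$ coordinates; at those scales the inverse insertion map is only about $\frac12$-H\"older, so the ``nearly Lipschitz'' step needs more care than the paper gives it, a difficulty your vanishing-density choice avoids entirely in the $\varphi\in A$ case. Two small points to repair in your write-up: the displayed inequality $r_{q_k}(x)/\varphi(q_k)\ge p_k/\varphi(p_k^{1+\alpha})$ rests on the monotonicity of $\varphi$ and therefore requires $q_k\le p_k^{1+\alpha}$, so the extra room you need must come from thinning the sequence of admissible $p$'s supplied by the definition of $A$, not from pushing $q_k$ beyond $p_k^{1+\alpha}$; and you should force a $0$ immediately after (and before) each inserted block, in addition to the periodic spacers, so that free digits cannot prolong the run of $p_k$ ones and $r_{T_k}(x)$ really equals $p_k$ at the intermediate times.
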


It is easy to check that $\log_2 n\in A$ and $n^\beta\in A$, where $0<\beta<1$.
Unfortunately, many other speeds are not in the set $A.$ In \cite{LW5}, the authors conjectured that for $\varphi\in H$, the Hausdorff dimension of $E_{\max}^\varphi$ is either one or zero. In this paper, we solve this conjecture affirmatively. More precisely, we have the following result.

\begin{theorem}\label{MRR1}
Let $\varphi\in H$ and $E_{\max}^\varphi$ be defined as in $\eqref{jhf}$.
\begin{enumerate}
\item [\textup{(1)}] If $\limsup\limits_{n\to \infty}\frac{n}{\varphi(n)}=+\infty$ then $\hdim E_{\max}^\varphi=1;$
\item [\textup{(2)}] If $\limsup\limits_{n\to \infty}\frac{n}{\varphi(n)}<+\infty$ then $\hdim E_{\max}^\varphi=0.$
\end{enumerate}
\end{theorem}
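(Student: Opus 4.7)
My plan is to handle the two parts separately, with essentially all the work in part~(1). Part~(2) is a one-line observation: if $\limsup_{n\to\infty} n/\varphi(n)\le C$ for some finite $C$, then $\varphi(n)\ge n/(C+1)$ for all sufficiently large $n$, and since $r_n(x)\le n$ we get $r_n(x)/\varphi(n)\le C+1$ uniformly in $x\in[0,1]$. Hence $\limsup_n r_n(x)/\varphi(n)\le C+1<+\infty$ for every $x$, so $E_{\max}^\varphi=\emptyset$ and $\hdim E_{\max}^\varphi=0$ by convention.

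For part~(1), I will construct a Cantor-type subset $K\subseteq E_{\max}^\varphi$ of full Hausdorff dimension by planting exceptionally long blocks of $1$'s at carefully chosen scales while leaving large ``free'' zones between them. Using both $\limsup_n n/\varphi(n)=+\infty$ and $\varphi(n)\to\infty$, I inductively select integers $0=u_0<u_1<u_2<\cdots$ satisfying
\[
\frac{u_k}{\varphi(u_k)}\ge k^2 \qquad\text{and}\qquad \varphi\bigl(u_k-\lceil k\varphi(u_k)\rceil\bigr)\ge k\,u_{k-1}.
\]
Concretely, having fixed $u_{k-1}$, I first pick a threshold $T_k$ with $\varphi(T_k)\ge k\,u_{k-1}$ (possible by $\varphi\to\infty$), then choose $u_k\ge 2T_k$ with $u_k/\varphi(u_k)\ge k^2$ (possible by $\limsup_n n/\varphi(n)=+\infty$). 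Set $L_k=\lceil k\varphi(u_k)\rceil$, $t_k=u_k-L_k$, $\ell_k=t_k-u_{k-1}-2$, and $m_k=k$; these choices force $L_k\le u_k/k$, $u_k\ge k^3 u_{k-1}$, and $\ell_k/u_k\to 1$. The set $K$ consists of those $x\in[0,1]$ whose dyadic digits satisfy: $x_i=1$ for each $i\in[t_k+1,u_k]$ (planted blocks); $x_{u_{k-1}+1}=x_{t_k}=0$ (buffer zeros separating adjacent zones); and on each free block $[u_{k-1}+2,t_k-1]$ the digits are arbitrary subject only to the constraint that no run of $1$'s exceeds $m_k$.

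For any $x\in K$ the planted block gives $r_{u_k}(x)\ge L_k\ge k\varphi(u_k)$, so $\limsup_n r_n(x)/\varphi(n)=+\infty$; and at time $t_k$ the longest $1$-run so far is at most $\max(L_{k-1},m_k)$, whence the inductive condition forces $r_{t_k}(x)/\varphi(t_k)\le(L_{k-1}+m_k)/(k\,u_{k-1})\to 0$, giving $\liminf_n r_n(x)/\varphi(n)=0$. Thus $K\subseteq E_{\max}^\varphi$. For the dimension, let $N(\ell,m)$ count binary words of length $\ell$ with maximal $1$-run at most $m$; standard transfer-matrix arguments give $N(\ell,m)\asymp\lambda_m^\ell$ with $\lambda_m\nearrow 2$ as $m\to\infty$. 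Equipping $K$ with the product measure $\mu$ that is uniform on each free block (conditioned on its run constraint) and deterministic on planted and buffer positions, the relations $u_{k-1}=o(\ell_k)$ and $m_k\to\infty$, together with the marginal bound $\mu(\text{length-}p\text{ prefix of block }k)\le C\lambda_{m_k}^{-p}$, yield
\[
\liminf_{n\to\infty}\frac{-\log_2\mu([x_1,\ldots,x_n])}{n}\ge 1 \qquad \text{for every } x\in K.
\]
The mass distribution principle then gives $\hdim K\ge 1-\varepsilon$ for every $\varepsilon>0$, so $\hdim K=1$.

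The main difficulty is the joint inductive choice above. Theorem~\ref{MR11} handled the analogous construction under the stronger hypothesis $\varphi\in A$ by exploiting the power gap $n^{1+\alpha}$ built into the definition of $A$; the key new observation is that this gap is unnecessary. The naked hypotheses $\limsup_n n/\varphi(n)=+\infty$ and $\varphi(n)\to\infty$ already allow $u_k$ to be placed simultaneously on the (possibly very sparse) ``good'' subsequence where $u_k/\varphi(u_k)$ is arbitrarily large and far enough out that $\varphi(t_k)$ dwarfs the previous data $u_{k-1}$. A secondary technical point is keeping $m_k$ small enough that $m_k=o(\varphi(t_k))$ while large enough that $\log_2\lambda_{m_k}\to 1$; since $\varphi(t_k)\to\infty$ along the induction, taking $m_k=k$ (or any slow $m_k\to\infty$) suffices.
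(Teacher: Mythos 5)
Your argument is correct, and in part (1) it reaches the lower bound by a genuinely different route from the paper's. The paper fixes $p$, takes the self-similar set $E_p$ of dimension $\frac{p-2}{p}$ on which runs of $1$'s are uniformly bounded, inserts the words $01^{[\log\varphi(n_{2k+1})]}0$ and $01^{[\frac{1}{2}n_{2k+2}]}0$ at a sparse sequence of scales $n_k$ with $n_k\ge 2^{n_{k-1}}$ and $n_k/\varphi(n_k)\to\infty$, and shows that the inverse of the insertion map is nearly Lipschitz, so the image retains dimension at least $\frac{p-2}{p}$; letting $p\to\infty$ gives dimension one. You instead build a single Moran-type set whose free blocks have relative length tending to $1$ and carry the run cap $m_k=k\to\infty$, and you get full dimension from the transfer-matrix count $N(\ell,m)\asymp\lambda_m^{\ell}$ with $\lambda_m\nearrow 2$ together with the mass distribution principle. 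The combinatorial core is the same in both proofs (plant a run of length comparable to $k\varphi(u_k)$, respectively $[\frac{1}{2}n_{2k}]$, ending at sparse scales where $n/\varphi(n)$ is large, and test the liminf just before those scales), but the dimension arguments differ: yours trades the paper's near-Lipschitz distortion estimate for an entropy computation, and because $m_k\to\infty$ it yields $\hdim K=1$ from one construction rather than as a supremum over $p$. One point where your version is tighter: you control the liminf at $t_k$ through the explicit requirement $\varphi(t_k)\ge k\,u_{k-1}$, which dominates the previously planted run since $L_{k-1}\le u_{k-1}$; the paper's corresponding claim $r_{n_{2k-1}}(\pi(x^*))=[\log\varphi(n_{2k-1})]$ tacitly requires $\varphi(n_{2k-1})$ to outgrow $n_{2k-2}$, a condition that must be (and easily can be) folded into the choice of the subsequence. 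When you write this up, do spell out the marginal bound $\mu(\text{length-}p\text{ prefix})\le C_{m}\lambda_m^{-p}$ with its $m$-dependent constant, and verify the local dimension estimate at all intermediate scales $n\in(u_{k-1},u_k]$ rather than only at $n=u_k$; both steps are routine given $u_{k-1}=o(u_k)$, $L_k=o(u_k)$ and $m_k=o(\ell_k)$.
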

\begin{remark}\label{R1}
In fact, under the condition $\limsup\limits_{n\to \infty}\frac{n}{\varphi(n)}<+\infty$ the set $E_{\max}^\varphi=\emptyset$ since $r_n(x)\le n$ for any $n\ge 1$ and $x\in [0,1].$
\end{remark}
We would like to emphasize that the method in \cite{LW5} cannot be applied to the generalized functions $\varphi\in H$. To prove Theorem \ref{MRR1}, we will use another method which is inspired by the idea in \cite{FW}.

Clearly,  if $\varphi\in A$ then $\varphi$ satisfies the condition in the first part of Theorem \ref{MRR1}. Therefore, Theorem \ref{MRR1} generalizes Theorem \ref{MR11}.

We can also discuss the size of $E_{\max}^\varphi$ from the topological point of view, which is another way to describe the size of a set.
 Recall that in a metric space $X$,
 a set $R$ is called residual if its complement is of the first category. Moreover, in a complete metric space a set
 is residual if it contains a dense $G_\delta$ set, see \cite{Ox}.  Recent results show that certain exceptional sets
 can also be large from the topological point of view, see, for example, \cite{BLV,BLV2,BLV2,HLOPS,LW2,LW3,LWW,LW4,Ol2} and references therein. In \cite{LW5} the authors also showed that the set $E_{\max}^\varphi$ is residual if $\varphi\in A$. The following result is a generalization of it.

\begin{theorem}\label{MRR2}
Let $\varphi\in H$ with $\limsup\limits_{n\to \infty}\frac{n}{\varphi(n)}=+\infty$ and $E_{\max}^\varphi$ be defined as in \eqref{jhf}. Then the set $E_{\max}^\varphi$ is residual in $[0,1].$
\end{theorem}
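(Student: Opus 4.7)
The plan is to exhibit a dense $G_\delta$ subset of $[0,1]$ contained in $E_{\max}^\varphi$. Let $D\subset[0,1]$ denote the countable (hence meagre) set of dyadic rationals and set $X=[0,1]\setminus D$; being a dense $G_\delta$ in $[0,1]$, the space $X$ is Baire. On $X$ every point has a unique dyadic expansion and the function $x\mapsto r_n(x)$ is locally constant, so the sets $\{x\in X:r_n(x)<c\}$ and $\{x\in X:r_n(x)>c\}$ are open in $X$ for every real $c$. Define
\[
U_{N,k}^{0}=\bigcup_{n\ge N}\{x\in X:r_n(x)<\varphi(n)/k\},\qquad U_{N,k}^{\infty}=\bigcup_{n\ge N}\{x\in X:r_n(x)>k\varphi(n)\}.
\]
Each is open in $X$, and a direct translation of the two asymptotic conditions defining $E_{\max}^\varphi$ yields
\[
E_{\max}^\varphi\cap X=\bigcap_{N,k\ge 1}\bigl(U_{N,k}^{0}\cap U_{N,k}^{\infty}\bigr),
\]
so by the Baire category theorem it suffices to show that each $U_{N,k}^{0}$ and each $U_{N,k}^{\infty}$ is dense in $X$.

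Density is tested on the basis of $X$ consisting of sets $\{x\in X:x_i=w_i,\ i=1,\ldots,m\}$ with $w\in\{0,1\}^m$, $m\ge 1$. For $U_{N,k}^{0}$, I would extend the prefix $(w_1,\ldots,w_m)$ by the non-eventually-constant sequence $0,1,0,0,1,0,0,0,1,\ldots$; the resulting $x$ lies in $X$ and has $r_n(x)\le\max\{r_m(w),1\}$ for every $n$, so $r_n(x)/\varphi(n)\to 0$ (using $\varphi(n)\to\infty$) and $x$ in fact belongs to every $U_{N,k}^{0}$.

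For $U_{N,k}^{\infty}$ the hypothesis $\limsup_{n\to\infty}n/\varphi(n)=+\infty$ enters: together with $\varphi(n)\to\infty$ it allows one to pick $n\ge\max\{N,m+1\}$ with $\varphi(n)>m$ and $n/\varphi(n)>k+1$, so that $L:=\lfloor k\varphi(n)\rfloor+1\le n-m$. Define $x$ by $x_j=w_j$ for $j\le m$, $x_{m+1}=\cdots=x_{m+L}=1$, and choose the remaining digits so that $x$ is irrational; then $r_n(x)\ge L>k\varphi(n)$ places $x$ in $U_{N,k}^{\infty}$ and in the prescribed basic set.

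Consequently $E_{\max}^\varphi\cap X$ is a dense $G_\delta$ in $X$; writing each defining open set as $V_i\cap X$ with $V_i$ open in $[0,1]$ and intersecting further with the countably many open sets $[0,1]\setminus\{d\}$ for $d\in D$ realises it as a dense $G_\delta$ in $[0,1]$, hence $E_{\max}^\varphi$ is residual. The decisive step is the density of $U_{N,k}^{\infty}$, where one needs a run of $1$'s of length strictly greater than $k\varphi(n)$ to still fit inside the first $n$ positions after a prefix of length $m$; this is precisely what the inequality $n-m>k\varphi(n)$, granted for arbitrarily large $n$ by the hypothesis, provides---and by Remark \ref{R1} the hypothesis cannot be dropped.
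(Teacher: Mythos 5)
Your proof is correct, and it takes a genuinely different (and in some ways cleaner) route than the paper. The paper's argument is constructive: it builds a specific nested sequence of open dense sets $\Omega_k$ in $\Sigma^\infty$ by appending to every finite word a block $0^{\ast}1^{[\log\varphi(n)]}$ or $0^{\ast}1^{[\frac12 n]}$ at carefully chosen positions $n=n_k$ from a sequence realizing $\limsup n/\varphi(n)=\infty$, so that $\Omega=\bigcap_k\Omega_k$ is a dense $G_\delta$ contained in $\pi^{-1}(E_{\max}^\varphi)$; this requires bookkeeping of the block lengths (e.g.\ $n-[\log\varphi(n)]-|\omega|>0$ and $[\log\varphi(n)]\ge|\omega|$) to guarantee both the liminf and limsup conditions along a single chain of extensions. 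You instead observe that $E_{\max}^\varphi$ itself, restricted to the non-dyadic points $X$, is literally a countable intersection $\bigcap_{N,k}(U_{N,k}^0\cap U_{N,k}^\infty)$ of sets that are open in $X$ because $r_n$ is locally constant there, and then verify density of each $U_{N,k}^0$ and $U_{N,k}^\infty$ separately; the two density checks decouple (a bounded-run tail for the liminf condition, a single long run of length $\lfloor k\varphi(n)\rfloor+1\le n-m$ for the limsup condition, the latter being exactly where the hypothesis $\limsup n/\varphi(n)=+\infty$ together with $\varphi(n)\to\infty$ is used). Your version buys the extra structural fact that $E_{\max}^\varphi$ is itself a $G_\delta$ set modulo the countable set of dyadic rationals, not merely a set containing a dense $G_\delta$, and it avoids the paper's interleaved induction; the paper's construction, on the other hand, is the one that does double duty in the proof of Theorem \ref{MRR1}, where an explicit Cantor-like subset of controlled dimension is needed. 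All the quantitative steps in your argument check out: the identity $E_{\max}^\varphi\cap X=\bigcap_{N,k}(U_{N,k}^0\cap U_{N,k}^\infty)$ is the standard unravelling of $\liminf=0$ and $\limsup=+\infty$, and the inequality $n-m>k\varphi(n)$ obtained from $n/\varphi(n)>k+1$ and $\varphi(n)>m$ does ensure the long run fits inside the first $n$ digits.
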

Noting Remark \ref{R1}, Theorem \ref{MRR2} tells us that for any $\varphi\in H$ the set $E_{\max}^\varphi$ is either residual or empty.

\section{Proofs }
This section is devoted to the proofs of Theorem $\ref{MRR1}$ and Theorem \ref{MRR2}. First we need to introduce some notation.
For $n\in \N$, let
\[
\Sigma^{n}=\{(x_1,\cdots,x_n):x_i\in\{0,1\},i=1,\cdots,n\}
\] and
\[
\Sigma^{*}=\bigcup_{n \in \N}\Sigma^{n}.
\]
For each $\omega=(\omega_1,\cdots,\omega_n)\in \Sigma^{n}$, let $|\omega|=n$ denote the length of the word $\omega.$ For $\omega=(\omega_{1}\ldots \omega_{n})\in \Sigma^{n}$ and a positive integer $m$ with $m\le n$,
or for $\omega=(\omega_{1},\omega_{2},\ldots )\in \Sigma^\infty$ and a positive integer $m$, let $\omega|_m=(\omega_{1}\ldots \omega_{m})$ denote the truncation of $\omega$ to the $m$th place.
For two words $\omega=(\omega_1,\omega_2,\cdots, \omega_n)\in\Sigma^n$ and $\tau=(\tau_1,\tau_2,\cdots,\tau_m)\in\Sigma^m$, we denote their concatenation by $\omega\tau=(\omega_1,\cdots,\omega_n,\tau_1,\cdots,\tau_m),$ which is a word of length $n+m.$

It is well known that the space $\Sigma^\infty$ is
compact if it is equipped with the usual metric defined by
\[
d(x,y)=2^{-\min\{k:x_{k+1}\not=y_{k+1}\}}, x,y\in\Sigma^\infty.
\]
In this paper, the notation $[x]$ denotes the integer part of $x$, and the notation $a^m $ means $\underbrace{a\cdots a}_{\text{$m$ times}}$, where $a=0$ or $1.$
\begin{proof}[Proof of Theorem $\ref{MRR1}$.]

Noting Remark \ref{R1}, we only need to prove the first part. Let $\varphi\in H$ and $\limsup\limits_{n\to \infty}\frac{n}{\varphi(n)}=+\infty$.
It is sufficient to show that $\hdim E_{\max}^\varphi \ge 1-\gamma$ for any $\gamma>0$.

Fix $0<\gamma<1$. Choose $p\in \mathbb{N}$ such that $\frac{p-2}{p}>1-\gamma.$ Let
\[
E_p=\{x\in\Sigma^\infty: \text{$x_i=1$ for $1\le i\le p$ and $x_{kp+1}=x_{kp+p}=0$ for any $k\ge 1$}\}.
\]
Since the set $E_p$ can be viewed as a self-similar set generated by $2^{p-2}$ similitudes with ratio $2^{-p}$, it follows from the dimensional formula of self-similar set (see, for example, \cite{Fal}) that $\hdim E_p=\frac{p-2}{p}.$

In the follows, by means of $E_p$ we will construct a set $E^*_p$ such that $\pi(E^*_p)\subset E_{\max}^\varphi$ and define a one-to-one map $f$ from $E_p$ onto $E^*_p$. Moreover, the map $f^{-1}$ is nearly Lipschitz on $f(E_p)$, namely,  for any $\varepsilon>0$, there exists some $N_0$ such that $d(f(x),f(y))<2^{-n}$ implies that $d(x,y)<2^{-n(1-\varepsilon)}$ for any $n>N_0.$ This implies that $\hdim E^*_p=\hdim f(E_p)\ge \hdim E_p$ (see Proposition 2.3 of \cite{Fal}) . Therefore,
\[
\hdim E_{\max}^\varphi\ge \hdim \pi(E^*_p)=\hdim f(E_p)\ge \hdim E_p=\frac{p-2}{p}>1-\gamma,
\]
as desired. Here the first equality follows from that facts that $\pi:\Sigma^\infty\to [0,1]$ is a bi-Lipschitz map except a countable set (the set of  binary endpoints) and the Hausdorff dimension of a countable set is zero.

Next we will construct the desired set $E^*_p.$  Let $n_0=\inf\{n:\log \varphi(n)\ge p\}$.

Since $\limsup\limits_{n\to \infty}\frac{n}{\varphi(n)}=+\infty$, there exists a subsequence $\{n_k\}_{k\ge 1}$ such that
\begin{equation}\label{con1}
n_k \ge 2^{n_{k-1}}, k\ge 1
\end{equation}
and
\begin{equation}\label{con2}
\lim\limits_{k\to \infty}\frac{n_k}{\varphi(n_k)}=+\infty.
\end{equation}

For each $x=(x_i)_{i\ge1}\in E_p,$ we will construct a sequence $\{x^{(n)}\}_{n\ge 0}$ of points in $\Sigma^\infty$ by induction. Let $x^{(0)}=x.$ Suppose we have defined $x^{(j)}=x_1^{(j)}x_2^{(j)}\cdots x_n^{(j)}\cdots$ for $0\le j\le 2k$. We define $x^{(2k+1)}$ and $x^{(2k+2)}$ as follows.
Let
\[
x^{(2k+1)}=x_{1}^{(2k)}\cdots x^{(2k)}_{n_{2k+1}-1-[\log \varphi(n_{2k+1})]}01^{[\log\varphi(n_{2k+1})]}0 x_{n_{2k+1}-[\log \varphi(n_{2k+1})]}^{(2k)}\cdots.
\]
 Namely, $x^{(2k+1)}$ is obtained by inserting word $01^{[\log\varphi(n_{2k+1})]}0$ in $x^{(2k)}$ at the place $n_{2k+1}-[\log \varphi(n_{2k+1})]$. Then, let \[
 x^{(2k+2)}=x^{(2k+1)}_1\cdots x^{(2k+1)}_{n_{2k+2}-1-[\frac{1}{2}n_{2k+2}]}01^{[\frac{1}{2}n_{2k+2}]}0 x_{n_{2k+2}-[\frac{1}{2}n_{2k+2}]}^{(2k+1)}\cdots.
 \]
 Similarly, $x^{(2k+2)}$ is obtained by inserting word $01^{[\frac{1}{2}n_{2k+2}]}0$ in $x^{(2k+1)}$ at the place $n_{2k+2}-[\frac{1}{2}n_{2k+2}]$.

By \eqref{con1}, we have $n_{2k+2}-1-[\frac{1}{2}n_{2k+2}]>n_{2k+1}+1$. Therefore, it is not difficult to check that
\[
x^{(2k+2)}|_{n_{2k+2}-1-[\frac{1}{2}n_{2k+2}]}=x^{(2k+1)}|_{n_{2k+2}-1-[\frac{1}{2}n_{2k+2}]}.
\]
 In other words, for $k\ge 1$ we have
\[
d(x^{(2k+1)},x^{(2k+2)})\le \left(\frac{1}{2}\right)^{\left(n_{2k+2}-1-[\frac{1}{2}n_{2k+2}]\right)},
\]
which implies that the limit of $\{x^{(n)}\}_{n\ge 0}$ exists. Let $ \lim\limits_{n\to \infty}x^{(n)}=x^*.$

 We claim that $\pi(x^*)\in E_{\max}^\varphi$. In fact, it follows from the construction of $x^*$ that $r_{n_{2k-1}}(\pi(x^*))=[\log \varphi(n_{2k-1})]$ and $r_{n_{2k}}(\pi(x^*))=[\frac{1}{2}n_{2k}]$ for $k\ge 1.$ Therefore, by \eqref{con1} and \eqref{con2} we have
 \[
 \liminf\limits_{n\to \infty}\frac{r_{n}(\pi(x^*))}{\varphi(n)}\le\lim\limits_{k\to \infty}\frac{r_{n_{2k-1}}(\pi(x^*))}{\varphi(n_{2k-1})}=\lim\limits_{k\to \infty}\frac{[\log \varphi(n_{2k-1})]}{\varphi(n_{2k-1})}=0
 \]
and
\[
\limsup\limits_{n\to \infty}\frac{r_{n}(\pi(x^*))}{\varphi(n)}\ge \lim\limits_{k\to \infty}\frac{r_{n_{2k}}(\pi(x^*))}{\varphi(n_{2k})}=\lim\limits_{k\to\infty}\frac{[\frac{1}{2}n_{2k}]}{\varphi(n_{2k})}=+\infty.
\]

Define map $f:E_p\to \Sigma^\infty$ by $f(x)=x^*$ and let $E^*_p=f(E_p)$.

Clearly, $f$ is injective. We now show that the map $f^{-1}$ is nearly Lipschitz on $E^*_p$.  For any $n\ge n_0$, there exists some $k\ge 1$ such that $n_{2k-1}\le n< n_{2k}$ or $n_{2k}\le n< n_{2k+1}$. We only discuss the case $n_{2k-1}\le n< n_{2k}$ because the other case can be treated similarly. Suppose $d(f(x),f(y))=d(x^*,y^*)<2^{-n}$, then $x^*_1x^*_2\cdots x^*_n=y^*_1y^*_2\cdots y^*_n$. Since $x=f^{-1}(x^*), y=f^{-1}(y^*)$ is obtained by removing the inserted words we have $x_1x_2\cdots x_{n'}=y_1y_2\cdots y_{n'}$, where
\[
n'=n-\left((2+[\log \varphi(n_1)])+(2+[\frac{1}{2}n_2])\ldots+(2+[\log \varphi(n_{2k-1})])\right).
\]
By \eqref{con1} and the fact that $\lim\limits_{n\to \infty}\varphi(n)=+\infty$, we have
\[
\lim\limits_{k\to \infty}\frac{\left(2+[\log \varphi(n_1)])+(2+[\frac{1}{2}n_2])\ldots+(2+[\log \varphi(n_{2k-1})]\right)}{n_{2k-1}}=0.
\]
Therefore, for any $\varepsilon>0$ there exists an integer $N_1>n_0$ such that $n'> n-\varepsilon n_{2k-1}\ge (1-\varepsilon)n$ for any $n>N_1$
Therefore, $d(x,y)<2^{-n(1-\varepsilon)}$.
Similarly, in the other case there exists some integer $N_2>n_0$ such that $d(x,y)<2^{-n(1-\varepsilon)}$ for any $n>N_2$. Finally, let $N_0=\max\{N_1,N_2\}$. The proof of Theorem \ref{MRR1} is completed.
\end{proof}

\begin{proof}[Proof of Theorem $\ref{MRR2}$.]
To prove Theorem \ref{MRR2}, it is sufficient to construct a dense $G_\delta$ subset $F\subset [0,1]$ such that $F\subset E_{\max}^\varphi.$

We first introduce a notation. For $\omega=(\omega_1,\ldots,\omega_{m})\in \Sigma^*$ or $\omega=(\omega_1,\ldots,\omega_{m}\ldots) \in \Sigma^\infty$ and $n\in\N$ with $n\le m$, let $r_n(\omega)$ denote the length of the longest run of $1$'s in $\omega|_n=(\omega_1,\omega_2,\ldots,\omega_{n})$, that is,
\[
r_n(\omega)=\max\{\ell: \text{$\omega_{i+1}=\cdots=\omega_{i+\ell}=1$ for some $0\le i\le n-\ell$}\}.
\]

Since $\limsup\limits_{n\to \infty}\frac{n}{\varphi(n)}=+\infty$ and $\lim\limits_{n\to \infty}\varphi(n)=+\infty$, there exists a strictly monotonically increasing subsequence $\{m_k\}_{k\ge 1}\subset \mathbb{N}$ such that
\begin{equation}\label{con22}
\lim\limits_{k\to \infty}\frac{m_k}{\varphi(m_k)}=+\infty.
\end{equation}

Let $\Omega_{0} =\Sigma^*$. For each $\omega\in \Omega_{0},$ choose positive integer $n_1=n_1(\omega)\in\{m_k\}_{k\ge 1}$ such that $n_1-[\log \varphi(n_1)]- |\omega|>0$ and $[\log \varphi(n_1)]\ge |\omega|$. These conditions can always be satisfied by choosing $n_1$ large enough.

Define
\[
\Omega_1=\bigcup_{\omega\in \Omega_{0}}\omega 0^{n_1-[\log \varphi(n_1)]-|\omega|}1^{[\log \varphi(n_1)]}.
\]
For any $\tau\in \Omega_1,$ there exists some $\omega\in \Omega_{0}$ with $|\omega|=n_1(\omega)$ such that $\tau=\omega 0^{n_1-[\log \varphi(n_1)]-|\omega|}1^{[\log \varphi(n_1)]}.$ It is easy to check that $|\tau|=n_1(\omega)$ and $r_{n_1}(\tau)=[\log \varphi(n_1(\omega))]$ since $n_1-[\log \varphi(n_1)]- |\omega|>0$ and $[\log \varphi(n_1)]\ge |\omega|$.

Then, for $\tau\in \Omega_1,$ choose a positive integer $n_2=n_2(\tau)\in \{m_k\}_{k\ge 1}$ such that $n_2>2n_1$.
Define
\[
\Omega_2=\bigcup_{\tau\in \Omega_{1}}\tau 0^{n_2-[\frac{1}{2}n_2]-n_1}1^{[\frac{1}{2}n_2]}.
\]
Analogously, for any $\xi\in \Omega_2$ there exists some word $\tau\in \Omega_{1}$ with $|\tau|=n_1$ such that $\xi=\tau 0^{n_2-[\frac{1}{2}n_2]-n_1}1^{[\frac{1}{2}n_2]}$ and $|\xi|=n_2(\tau)$. It follows from $n_2> 2n_1$ that $n_2-[\frac{1}{2}n_2]-n_1>0$ and therefore $r_{n_2}(\xi)=[\frac{1}{2}n_2]$.

Suppose we have chosen the positive integers $n_1,n_2,\cdots, n_{2m-1}, n_{2m}, $ and have defined the sets $\Omega_1, \Omega_2,\cdots, \Omega_{2m-1}, \Omega_{2m},$ we next show how to define the sets $\Omega_{2m+1}$ and $\Omega_{2m+2}$.  For $\eta\in \Omega_{2m}$ with $|\eta|=n_{2m}$, choose integer $n_{2m+1}=n_{2m+1}(\eta)\in\{m_k\}_{k\ge 1}$ such that $n_{2m+1}-[\log \varphi(n_{2m+1})]-n_{2m}>0$ and $[\log \varphi(n_{2m+1})]\ge|\eta|$. Again, these conditions can always be satisfied by choosing $n_{2m+1}$ large enough.
 Define
\[
\Omega_{2m+1}=\bigcup_{\eta\in \Omega_{2m}}\eta 0^{n_{2m+1}-[\log \varphi(n_{2m+1})]-n_{2m}}1^{[\log \varphi(n_{2m+1})]}.
\]

For any $\mu\in \Omega_{2m+1},$ there exists some word $\eta\in \Omega_{2m}$ with $|\eta|=n_{2m}$ such that $\mu=\eta 0^{n_{2m+1}-[\log \varphi(n_{2m+1})]-n_{2m}}1^{[\log \varphi(n_{2m+1})]}$.  It is easy to check that $|\mu|=n_{2m+1}$. Moreover,  $r_{n_{2m+1}}(\mu)=[\log \varphi(n_{2m+1})]$ since $n_{2m+1}-[\log \varphi(n_{2m+1})]-n_{2m}>0$ and $[\log n_{2m+1}]\ge|\eta|$.

Then, for $\sigma\in\Omega_{2m+1}$ with $|\sigma|=n_{2m+1}$, choose a integer $n_{2m+2}=n_{2m+2}(\sigma)\in\{m_k\}_{k\ge 1}$ such that $n_{2m+2}>2n_{2m+1}$, and define

\[
\Omega_{2m+2}=\bigcup_{\sigma\in \Omega_{2m+1}}\sigma 0^{n_{2m+2}-[\frac{1}{2}n_{2m+2}]-n_{2m+1}}1^{[\frac{1}{2}n_{2m+2}]}.
\]
Analogously, for any $\nu\in \Omega_{2m+2},$ we have $|\nu|=n_{2m+2}$ and $r_{n_{2m+2}}(\nu)=[\frac{1}{2} n_{2m+2}]$ since $n_{2m+2}>2n_{2m+1}$.

Next, define

\[
\Omega=\bigcap_{k=0}^\infty \Omega_k.
\]
Then, we claim that $\Omega$ is residual in $\Sigma^\infty.$ In fact, $\Omega$ is a $G_\delta$ set in $\Sigma^\infty$ since it is not difficult to check that each cylinder set $\Omega_k$ is open. Moreover, by construction, each set $\Omega_k$ is dense, and so it follows from Baire's theorem that $\Omega$ is also dense in $\Sigma^\infty$.

Write $\Sigma_{\max}^\varphi=\pi^{-1}(E_{\max}^\varphi).$ we will show that
\begin{equation}\label{contain}
\Omega\subset \Sigma_{\max}^\varphi.
\end{equation}
 For $\omega\in\Omega$, it follows from the construction of the set $\Omega$ that
\[
r_{n_{2m-1}}(\omega)=[\log \varphi(n_{2m-1})], \quad  r_{n_{2m}}(\omega)=[\frac{1}{2}n_{2m}], m\ge 1.
\]
Therefore, by $\eqref{con22}$ we have
\[
\liminf\limits_{n\to \infty}\frac{r_n(\omega)}{\varphi(n)}\le\lim\limits_{m\to \infty}\frac{r_{n_{2m-1}}(\omega)}{\varphi (n_{2m-1})}=\lim\limits_{m\to \infty}\frac{[\log \varphi(n_{2m-1})]}{\varphi (n_{2m-1})}=0
 \]
 and
 \[
 \limsup\limits_{n\to \infty}\frac{r_n(\omega)}{\varphi(n)}\ge \lim\limits_{m\to \infty}\frac{r_{n_{2m}}(\omega)}{\varphi( n_{2m})}=\lim\limits_{m\to \infty}\frac{[\frac{1}{2}n_{2m}]}{\varphi( n_{2m})}=+\infty,
\]
which imply $\Omega\subset \Sigma_{\max}^\varphi$.

Finally, we use the set $\Omega$ to define the desired set $F$.

Let
\[
B=\left\{x\in [0,1]:x=\frac{k}{2^n}, k, n\in\mathbb{N}\right\},
\]
and write
\[
\widetilde{\Sigma}=\Sigma^\infty\setminus \pi^{-1}(B), \quad  \widetilde{E}=[0,1]\setminus B.
\]

The map $\pi:\widetilde{\Sigma}\to \widetilde{E}$ is bijective. Note that $\pi^{-1}(B)$ is a $F_\sigma$ set, the set $\widetilde{\Sigma}$ is a $G_\delta$ set. Moreover, it is easy to check that $\widetilde{\Sigma}$  is dense in $\Sigma^\infty.$

Now, let
\[
F=\pi(\Omega\cap \widetilde{\Sigma}).
\]
Following the argument in \cite{LW5}, we can check that $F$ is the desired set. 

\end{proof}


\subsection*{Acknowledgements}
This project was supported by the National Natural Science
Foundation of China (11371148 \& 11301473), the Natural Science
Foundation of Fujian Province (2014J05008) and the Program for New Century Excellent Talents at Minnan Normal University (MX13002).

\end{document}